\newcommand{\id}{\mathrm{id}}
\newcommand{\C}{\mathcal{C}}
\newcommand{\E}{\mathcal{E}}
\newcommand{\PSTS}{\mathbf{PSTS}}
\newcommand{\Graph}{\mathbf{Graph}}
\newcommand{\Perf}{\mathbf{Perf}}
\newcommand{\Set}{\mathbf{Set}}
\newcommand{\isleftadjoint}{\dashv}
\newtheorem{theorem}{Theorem}[section]
\newtheorem{corollary}[theorem]{Corollary}
\theoremstyle{definition}
\newtheorem{definition}[theorem]{Definition}
\newtheorem{example}[theorem]{Example}
\numberwithin{equation}{section}
\begin{document}
\title{Two monads on the category of graphs}
\author{Gejza Jen\v ca}
\newcommand{\acr}{\newline\indent}
\address{\llap{*\,}
	Department of Mathematics and Descriptive Geometry\acr 
	Faculty of Civil Engineering\acr
	Radlinsk\' eho 11, 81368 Bratislava\acr 
	SLOVAKIA}
\email{gejza.jenca@stuba.sk}
\thanks{
This research is supported by grants VEGA 2/0069/16, 1/0420/15,
Slovakia and by the Slovak Research and Development Agency under the contracts
APVV-14-0013, APVV-16-0073}
\begin{abstract}
We introduce two monads on the category of graphs and prove that their Eilenberg-Moore
categories are isomorphic to the category of perfect matchings and the category
of partial Steiner triple systems, respectively. 
As a simple application of these
results, we describe the product in the categories of perfect matchings
and partial Steiner triple systems.
\end{abstract}
\subjclass[2010]{Primary 05C70; Secondary 51E10,18C15}
\keywords{perfect matching, partial Steiner triple, monad}

\maketitle

\section{Introduction}

Despite of the fact that there is a considerable amount of literature about graph
homomorphisms and their properties, the attempts to look at graphs from
the viewpoint of category theory appear to be rather rare.  

In the present note we prove that two classical notions of graph theory arise
as instances of a category-theoretic notion of an {\em algebra for a monad}:
the notion of a {\em perfect matching} on a graph and the notion of a {\em
partial Steiner triple system}. In addition, we describe the product in the
categories of perfect matchings and partial Steiner triple systems.

\section{Preliminaries}

For here undefined notions of category theory we refer to introductory books
\cite{awodey2006category,riehl2016category}; both include a chapter devoted to
monads and their algebras. Alternatively, see the classic book \cite{mac1998categories}. 

\subsection{The category $\Graph$}

In this paper we shall deal solely with simple, loopless and undirected graphs. 
If $G$ is a graph, we write $V(G)$ for the (possibly empty) set of vertices of
$G$ and $E(G)$ for the set of edges of $G$. The edges of a graph $G$ are identified with
two-element sets of vertices of $G$.
The adjacency relation on the set of vertices of a graph $G$ is denoted by $\sim_G$. 
We shall mostly drop the subscript $G$ on $\sim_G$,
whenever there is no danger of confusion.

For two graphs $G$ and $H$, a {\em a morphism of graphs} $f:G\to H$ 
is a mapping of sets $f:V(G)\to V(H)$ such that, for all 
$u,v\in V(G)$, $u\sim v$ implies that $f(u)\sim f(v)$. The morphisms of
graphs are usually called {\em homomorphisms} \cite{hahn1997graph}.

Clearly, the composition of two morphisms is a morphism and, for every graph
$G$, the identity map $\id_G$ on $V(G)$ is a morphism. So the class of all
graphs equipped with morphisms of graphs forms a category, which we will call
$\Graph$.

Most of the categorically-minded authors prefer to deal with graphs
that admit loops, see for example \cite{brown2008graphs}. The reason
for that decision is probably that the category $\Graph$ does not have all coequalizers,
so it is not cocomplete.
Perhaps surprisingly, lack of loops in $\Graph$ will be necessary prove the
main results of the present note.

\subsection{Monads and their algebras}

A monad on a category $\C$ is a triple $(T,\eta,\mu)$,
where $T$ is an endofunctor on $\C$ and $\eta:\id_C\to T$, $\mu:T\circ T\to T$
are natural transformations of endofunctors on $\C$ such that,
$\mu\circ(T\eta)=\mu\circ(\eta T)=\id_T$ and $\mu\circ(T\mu)=\mu\circ(\mu T)$.
That means, for every object $A$ of $\C$, the diagrams
\begin{equation}
\label{diag:monad}
\xymatrix{
T(A)
	\ar[r]^-{T(\eta_A)}
	\ar[rd]_-{\id_{T(A)}}
&
T^2(A)
	\ar[d]^-{\mu_A}
&
T(A)
	\ar[l]_-{\eta_{T(A)}}
	\ar[ld]^-{\id_{T(A)}}
\\
~
&
T(A)
}
\qquad
\xymatrix{
T^3(A)
	\ar[r]^{T(\mu_A)}
	\ar[d]_{\mu_{T(A)}}
&
T^2(A)
	\ar[d]^{\mu_A}
\\
T^2(A)
	\ar[r]^{\mu_A}
&
T(A)
}
\end{equation}
commute.

In what follows, we shall mostly drop $\mu$ and $\eta$ from the signature of the monad
and simply write ``a monad $T$ on $\C$''.

\begin{example}
\label{ex:monoidmonad}
For a set $X$, a {\em word over the alphabet $X$} is
a finite sequence of elements of $X$, possibly empty. To avoid
notational ambiguities, we enclose words in square brackets.
For example, $[12023]$, $[7]$ and $[\,]$ are three words over the alphabet $\mathbb N$.

Consider an endofunctor $M$ on the category of sets $\Set$ that takes a set $X$
the set of all words over the alphabet $X$ and a mapping
$f:X\to Y$ to a mapping $M(f):M(X)\to M(Y)$ that operates on words characterwise:
$$
M(f)([x_1\dots x_n])=[f(x_1)\dots f(x_n)].
$$
For a set $X$, $\eta_X:X\to M(X)$ is given by the rule $\eta_X(x)=[x]$ (the 
one-letter word) and the mapping $\mu_X:M^2(X)\to M(X)$ concatenates the
inner words:
$$
\mu_X([[x_1^1\dots x_{n_1}^1][x_1^2\dots x_{n_2}^2]\dots[x_1^k\dots x_{n_k}^k]])=
[x_1^1\dots x_{n_1}^1 x_1^2\dots x_{n_2}^2 \dots x_1^k\dots x_{n_k}^k].
$$
Then $M$ is a monad on $\Set$, we call it the {\em free monoid monad}.
\end{example}

Note that, for every set $X$, $M(X)$ is (the underlying set of) the free monoid
freely generated by $X$, that means the set of all monoid terms (words) over
the set $X$. The $\eta_X$ map ``embeds the variables'' into terms and the
$\mu_X$ map ``evaluates a term over the free monoid''. The construction
generalizes, in a straightforward way, if we replace monoids with other
equational classes (or varieties) of universal algebras
\cite{Gra:UA}, for example groups or semilattices. 

Although monads are interesting and useful structures per se 
(see for example the seminal paper \cite{moggi1991notions} for applications in the theory
of programming), the main use of monads is in their
deep connection with the notion of an adjoint pair of functors.

On one hand, every adjunction $F\isleftadjoint G$ induces a monad on the domain
category of $F$. On the other hand,
every monad induces an adjunction between the underlying category $\C$ and another category
$\C^T$. The objects of $\C$ are called {\em algebras for
the monad $T$} and (in many cases) can be described as ``an object of $\C$
equipped with an additional operations-like structure''. The morphisms of $\C^T$ can be then thought of as
``the $\C$-morphisms preserving the additional structure''.

Let $(T,\eta,\mu)$ be a monad on a category $\C$.
The {\em category of algebras for $(T,\eta,\mu)$}, also known as the 
{\em Eilenberg-Moore category for $(T,\eta,\mu)$} is a category
(denoted by $\C^T$), such that  
objects (called {\em algebras for the monad $T$}) of $\C^T$ are pairs 
$(A,\alpha)$, where $\alpha:T(A)\to A$, such that the diagrams
\begin{equation}
\label{diag:algtriangle}
\xymatrix{
A \ar[r]^{\eta_A} \ar[rd]_{1_A} & T(A) \ar[d]^{\alpha} \\
 & A
}
\end{equation}
\begin{equation}
\label{diag:algsquare}
\xymatrix{
T^2(A) \ar[r]^{T(\alpha)} \ar[d]_{\mu_A} & T(A) \ar[d]^{\alpha}\\
T(A) \ar[r]_{\alpha} & A
}
\end{equation}
commute. A morphism of algebras 
$h:(A_1,\alpha_1)\to (A_2,\alpha_2)$ is a $\C$-morphism $h:A_1\to A_2$ such that
the diagram
\begin{equation}
\label{diag:algmorphism}
\xymatrix{
T(A_1) \ar[r]^{T(h)} \ar[d]_{\alpha_1}& T(A_2)\ar[d]^{\alpha_2}\\
A_1\ar[r]^{h}&A_2
}
\end{equation}
commutes.

The adjunction mentioned above between $\C$ and $\C^T$ is given by a pair of
functors $F\isleftadjoint U$. The forgetful right adjoint functor $U:\C^T\to\C$ maps an algebra
$(A,\alpha)$ to its underlying object, $U(A,\alpha)=A$ and a morphism of
algebras to its underlying $\C$-morphism. The left adjoint $F:\C\to\C^T$ maps
an object $A$ of $\C$ to the pair $(T(A),\mu_A)$. This pair is always an
algebra for the monad $T$. Such adjunctions (and adjunctions equivalent to them) 
are called {\em monadic}.

\begin{example}
The category of algebras $\Set^M$ for the free monoid monad $M$ from the 
Example~\ref{ex:monoidmonad} is isomorphic to the category of monoids. Indeed, if $A$ is a set and
$(A,\alpha)$ is an algebra for the monad $M$, then we may equip $A$ with a binary
operation $*:A\times A\to A$ given by the rule $a_1*a_2:=\alpha([a_1a_2])$ and a constant 
$e\in A$ given by $e:=\alpha([\,])$. One can then easily check that $(A,*,e)$ is a
monoid. On the other hand, every monoid $(A,*,e)$ induces an algebra $(A,\alpha)$ for the
free monoid monad given by the evaluation of terms: $\alpha([a_1\dots a_n])=a_1*\dots a_n$.
These constructions can be easily shown to be functorial. Moreover,
they establish an equivalence of categories 
between $\Set^M$ and the category of monoids.
\end{example}

Again, this example generalizes to any equational class of algebras: every equational class
of algebras $\E$ is equivalent to the category $\Set^T$, where $T$ is the free $\E$-algebra
monad on $\Set$. Thus, categories of algebras for a monad generalize equational classes
of algebras and the notion can be used to extend parts of the theory of equational classes
of algebras to a more general context, by using a different category than $\Set$ as the underlying
construction.

Examples include:
\begin{itemize}
\item Topological groups are algebras for a monad on the category of topological spaces and continuous maps.
\item Modules over a fixed ring are algebras for a monad on the category of abelian groups.
\item Small categories are algebras for a monad on the category of directed multigraphs (or quivers).
\item Compact Hausdorff spaces are algebras for a monad on $\Set$ \cite{manes1974compact}.
\end{itemize}

\section{The perfect matching monad and its algebras}

In this section, we will introduce a monad $T$ on $\Graph$ and prove that
the category of algebras for $T$ is isomorphic to the category of
graphs equipped with a perfect matching. We call this monad a 
{\em perfect matching monad}.

Recall, that a {\em perfect matching} \cite{lovasz2009matching} on a graph $A$ is a set $M$ of edges of $A$ such that
no two edges in $M$ have a vertex in common and $\bigcup M=V(A)$.

In the present note it will be of advantage  
if we use an alternative definition, that is clearly
equivalent to the usual one.
\begin{definition}
Let $A$ be a graph. A {\em perfect matching} on $A$ 
is a mapping $m:V(A)\to V(A)$ such that, for all $x\in V(A)$,
$\{x,m(x)\}$ is an edge of $A$ and $m\circ m=id_{V(A)}$
\end{definition}

The {\em category of perfect matchings} is a category, where
\begin{itemize}
\item
the objects are all pairs $(A,m)$, where $m$ is a perfect matching on a graph $A$ and
\item
the morphisms $f:(A,m)\to (Y,m)$ are graph homomorphisms $f:A\to Y$, that preserve the
$m$, meaning that for all vertices $x$ of $A$, $f(m(x))=m(f(x))$.
\end{itemize}
The category of perfect matchings is denoted by $\Perf$.

\begin{figure}
\begin{center}
\includegraphics{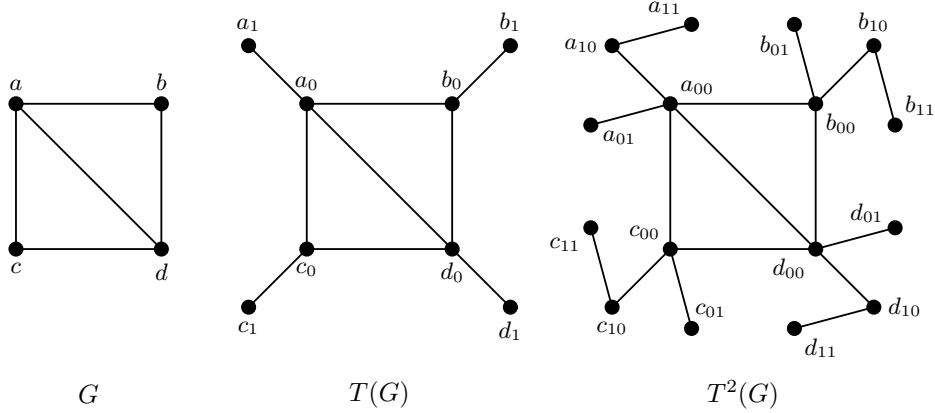}
\end{center}
\caption{The endofunctor $T$ adds a new pendant edge to every vertex.}
\label{fig:Tmonad}
\end{figure}

For a graph $A$, $T(A)$ is a graph that extends the graph $A$ by new leaf
vertices, attaching a new leaf (or a new pendant edge) vertex at every vertex
of $A$.

Let us introduce a notation that will turn out to be useful in our description
of the perfect matching monad. The set of vertices of $T(A)$ is the set
$V(A)\times\{0,1\}$; we denote the vertices of $T(A)$ by $x_i$, where $x\in
V(A)$, $i\in\{0,1\}$.  The vertices of $T(A)$ with $i=0$ mirror the original
vertices, the vertices with $i=1$ are the new leaves. The edges $T(A)$ are of
one of two types: either the edge is of the form $\{x_0,y_0\}$, where
$\{x,y\}\in E(A)$, or it is of the form $\{x_0,x_1\}$, where $x\in V(A)$.

It is easy to see that this construction is functorial, so $T$ is an
endofunctor on the category of graphs. Explicitly, if $f:A\to B$ is a graph
homomorphism, then $T(f):T(A)\to T(B)$ is given by the rule
$T(f)(x_i)=f(x)_i$.

Moreover, for every graph $A$ we clearly have a morphism $\eta_A:A\to T(A)$
given by the rule $\eta_A(x)=x_0$. There is another morphism $\mu_A:T(T(A))\to
T(A)$, given by the rule $\mu_A(v_{ij})=v_{i\oplus j}$, where $\oplus$ denotes
the exclusive or operation on the set $\{0,1\}$, also
known as the addition in the 2-element cyclic group $\mathbb Z_2$.
Note how this operation folds, in a natural way,
the new pendant edges $\{v_{00},v_{01}\}$ and $\{v_{10},v_{11}\}$ that were
added in the second iteration of $T$ onto the edge $\{v_0,v_1\}$.
These families of maps determine natural transformations
$\eta:\id_\Graph\to T$, $\mu:T^2\to T$.

\begin{theorem}
The triple $(T,\eta,\mu)$ is a monad on the category of graphs.
\end{theorem}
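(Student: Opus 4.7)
The plan is to verify, in turn, the three ingredients in the definition of a monad: that $T$ is a functor, that $\eta$ and $\mu$ are natural transformations, and that the unit triangles and associativity square commute. The whole argument is a sequence of direct computations in the parametrization $V(T^n(A)) = V(A)\times\{0,1\}^n$ of the iterated vertex sets, and the only non-elementary fact used is that $\oplus$ is associative on $\{0,1\}$ with $0$ as identity.

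First I would check that $T(A)$ really is a simple loopless graph: the edges $\{x_0,y_0\}$ have distinct endpoints because $A$ is loopless, and the pendant edges $\{x_0,x_1\}$ are nondegenerate by the different second coordinate. For a graph homomorphism $f\colon A\to B$, the formula $T(f)(x_i)=f(x)_i$ sends each edge $\{x_0,y_0\}$ of $T(A)$ to the edge $\{f(x)_0,f(y)_0\}$ of $T(B)$ (since $f$ is a homomorphism) and each pendant edge $\{x_0,x_1\}$ to $\{f(x)_0,f(x)_1\}$, so $T(f)$ is a graph homomorphism; the identities $T(\id_A)=\id_{T(A)}$ and $T(g\circ f)=T(g)\circ T(f)$ are immediate. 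For $\eta_A(x)=x_0$ the edge-preservation is clear, and naturality reduces to the pointwise identity $T(f)(\eta_A(x))=f(x)_0=\eta_B(f(x))$. For $\mu_A(v_{ij})=v_{i\oplus j}$ one enumerates the three kinds of edges of $T^2(A)$---inherited edges $\{x_{00},y_{00}\}$ with $\{x,y\}\in E(A)$, first-level pendants $\{x_{00},x_{10}\}$, and second-level pendants $\{x_{i0},x_{i1}\}$---and checks that each maps to an edge of $T(A)$. The only case worth noticing is the last, where $\{x_{i0},x_{i1}\}\mapsto\{x_i,x_{i\oplus 1}\}=\{x_0,x_1\}$, which is exactly the pendant edge at $x$. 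Naturality of $\mu$ is again a one-line pointwise computation from $T(f)\circ\mu_A$ and $\mu_B\circ T^2(f)$ both sending $v_{ij}$ to $f(v)_{i\oplus j}$.

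The monad axioms then reduce to arithmetic in $\mathbb Z_2$. On a vertex $x_i$ of $T(A)$ the unit triangles give $\mu_A(T(\eta_A)(x_i))=\mu_A(x_{0i})=x_{0\oplus i}=x_i$ and $\mu_A(\eta_{T(A)}(x_i))=\mu_A(x_{i0})=x_{i\oplus 0}=x_i$, using that $0$ is neutral for $\oplus$. For the associativity square, write a vertex of $T^3(A)$ as $v_{ijk}$; the two composites evaluate to $\mu_A(T(\mu_A)(v_{ijk}))=\mu_A(v_{(i\oplus j)k})=v_{(i\oplus j)\oplus k}$ and $\mu_A(\mu_{T(A)}(v_{ijk}))=\mu_A(v_{i(j\oplus k)})=v_{i\oplus(j\oplus k)}$, which coincide by associativity of $\oplus$. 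I do not expect any real obstacle here; conceptually, the monad structure on $T$ is exactly the one transported from the group law of $\mathbb Z_2$ along the "extra coordinate", and once the three edge-preservation checks above are made, every remaining axiom is a group identity in $\mathbb Z_2$.
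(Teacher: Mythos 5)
Your proof is correct and follows essentially the same route as the paper: the unit and associativity axioms are reduced to the fact that $(\{0,1\},\oplus,0)$ is a monoid, via the identical index computations $v_{(i\oplus j)\oplus k}=v_{i\oplus(j\oplus k)}$ and $x_{0\oplus i}=x_{i\oplus 0}=x_i$. The extra checks you include (that $T$ is a functor, that $\eta$ and $\mu$ are graph homomorphisms and natural) are verifications the paper asserts without proof in the paragraphs preceding the theorem, so your write-up is simply a more complete version of the same argument.
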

\begin{proof}
Let us prove that, for every graph $A$, the monad axioms
are satisfied. This is a consequence of the fact that
$(\{0,1\},\oplus,0)$ is a monoid. Indeed, let us check the validity of the 
associativity axiom. Let $x_{ijk}\in V(T^3(A))$. Then 
\begin{align*}
\bigl(\mu_A\circ\mu_T(A)\bigr)(v_{ijk})&=\mu_A\bigl(v_{i(j\oplus k)}\bigr)=v_{i\oplus(j\oplus k)}\text{ and }\\
\bigl(\mu_A\circ T(\mu_A)\bigr)(v_{ijk})&=\mu_A\bigl(v_{(i\oplus j)k}\bigr)=v_{(i\oplus j)\oplus k}.
\end{align*}
Similarly, the unit axioms are satisfied because $0$ is a unit for the operation $\oplus$.
\end{proof}

\begin{theorem}
The category $\Perf$ is isomorphic to the category of algebras for the monad $(T,\eta,\mu)$.
\label{thm:MatchingsAreAlgebras}
\end{theorem}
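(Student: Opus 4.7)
The plan is to exhibit mutually inverse functors $\Phi:\Graph^T\to\Perf$ and $\Psi:\Perf\to\Graph^T$. The key observation driving both constructions is that the unit axiom $\alpha\circ\eta_A=\id_A$ forces $\alpha(x_0)=x$ for every vertex $x$, so an algebra structure $\alpha$ on $A$ is entirely determined by the data of where $\alpha$ sends the ``new leaf'' vertices $x_1$.

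For $\Phi$, given an algebra $(A,\alpha)$, I define $m_\alpha(x):=\alpha(x_1)$. The condition that $\{x,m_\alpha(x)\}$ is an edge of $A$ follows from $\alpha$ being a graph homomorphism applied to the pendant edge $\{x_0,x_1\}$; crucially, looplessness of $A$ then forces $m_\alpha(x)\ne x$, so the edge is genuine. For the involution $m_\alpha\circ m_\alpha=\id$, I would evaluate both legs of the associativity square on $x_{11}\in V(T^2(A))$: $\mu_A$ sends $x_{11}$ to $x_0$ and hence down to $x$, while $T(\alpha)$ sends $x_{11}$ to $m_\alpha(x)_1$ and hence down to $m_\alpha(m_\alpha(x))$. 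The reverse functor $\Psi$ is obtained by setting $\alpha_m(x_0)=x$ and $\alpha_m(x_1)=m(x)$; the graph-homomorphism property and the associativity square reduce to checking the two edge types of $T(A)$ and the four vertex types of $T^2(A)$ respectively, with the only nontrivial associativity case ($x_{11}$) using precisely $m\circ m=\id$. Morphisms match up on the nose: the algebra-morphism square $h\circ\alpha_1=\alpha_2\circ T(h)$ specialized to $x_1$ is exactly $h\circ m_1=m_2\circ h$, and at $x_0$ it is automatic.

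Once both functors are in place, the identities $\Phi\circ\Psi=\id$ and $\Psi\circ\Phi=\id$ are immediate from the observation above that an algebra is determined by its action on the leaves. The main subtlety I anticipate is accounting for the loopless hypothesis: without it, nothing would prevent an algebra with $\alpha(x_1)=x$, which would correspond to a degenerate ``matching'' with fixed points. This is exactly why the result does not extend to the category of graphs with loops, and is the one place where the construction uses the ambient category in an essential way.
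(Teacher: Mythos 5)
Your proposal is correct and follows essentially the same route as the paper's proof: the unit axiom pins down $\alpha$ on the $x_0$ vertices, the pendant edge $\{x_0,x_1\}$ gives the matching edge, and the associativity square evaluated at $x_{11}$ gives the involution, with the converse and the morphism correspondence handled exactly as you describe. Your explicit remark that looplessness is what rules out $\alpha(x_1)=x$ is a correct and worthwhile observation that the paper only makes in passing in the introduction rather than inside the proof.
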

\begin{proof}
Let $\alpha:T(A)\to A$ be an algebra for $T$. The triangle diagram~\eqref{diag:algtriangle}
means that, for all $x\in V(A)$, $\alpha(x_0)=x$. So every algebra 
$\alpha$ is completely determined by its values on the new leaves $x_1$ of
$T(A)$. We claim that the mapping $m:V(A)\to V(A)$ given by the
rule $m(x)=\alpha(x_1)$ is a perfect matching on $A$. Indeed, since
$\alpha$ is a homomorphism of graphs, it must take the edge $\{x_0,x_1\}$ of $T(A)$
to an edge of $A$, so $\{\alpha(x_0),\alpha(x_1)\}=\{x,m(x)\}$ is an edge of $A$.
To prove that $m\circ m=id_{V(A)}$, consider the fact that the commutativity of the square diagram~\eqref{diag:algsquare}
means that, for all $x\in V(A)$ and $i,j\in\{0,1\}$,
$\alpha(\alpha(x_i)_j)=\alpha(x_{i\oplus j})$. In
particular, for $i=j=1$ we obtain
$$
m(m(x))=m(\alpha(x_1))=\alpha(\alpha(x_1)_1)=\alpha(x_{1\oplus 1})=\alpha(x_0)=x.
$$
This proves that $m$ is a perfect matching on $A$.

On the other hand, let $m$ be a perfect matching on $A$. Define a mapping 
$\alpha:V(T(A))\to V(A)$ by the rules $\alpha(x_0)=x$, $\alpha(x_1)=m(x)$.
We claim that $(A,\alpha)$ is an algebra for the monad $T$.
The fact that $\alpha$ is a graph homomorphism is easy to see. Clearly,
the triangle diagram~\eqref{diag:algtriangle} commutes.
Let us prove that~\eqref{diag:algsquare} commutes, that means, to prove the equality
$\alpha(\alpha(x_i)_j)=\alpha(x_{i\oplus j})$. For $i=0$,
$$
\alpha(\alpha(x_i)_j)=\alpha(\alpha(x_0)_j)=\alpha(x_j)=\alpha(x_{0\oplus j}).
$$
For $j=0$,
$$
\alpha(\alpha(x_i)_j)=\alpha(\alpha(x_i)_0)=\alpha(x_i)=\alpha(x_{i\oplus 0}).
$$
For $i=j=1$,
$$
\alpha(\alpha(x_1)_1)=\alpha(m(x)_1)=m(m(x))=x=\alpha(x_0)=\alpha(x_{1\oplus 1}),
$$
since $m$ is a perfect matching.

It remains to prove the these constructions are functorial and, as functors,
inverse to each other. The proof of this is completely straightforward and is
thus omitted.
\end{proof}
\begin{figure}
\begin{center}
\includegraphics{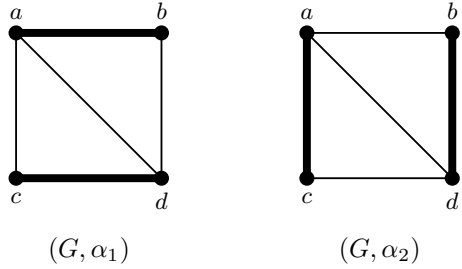}
\end{center}
\caption{Two perfect matchings of a square with a diagonal.}
\label{fig:match}
\end{figure}
\begin{example}
Let $G$ be the graph from Figure~\ref{fig:Tmonad}. It
has two perfect matchings, see Figure~\ref{fig:match}.
Under the isomorphism described in the proof of Theorem~\ref{thm:MatchingsAreAlgebras},
these two perfect matchings correspond to two algebras $(G,\alpha_1)$, $(G,\alpha_2)$
for the monad $T$ that are characterized by
$$
\begin{array}{|c||c|c|c|c|c|c|c|c|}
\hline
v & a_0 & b_0 & c_0 & d_0& a_1 & b_1 & c_1 & d_1\\
\hline
\alpha_1(v) &a &b &c &d & b & a & d & c \\
\alpha_2(v) &a &b &c &d & c & d & a & b \\
\hline
\end{array}
$$
\end{example}

Recall, that for two graphs $A,B$, their product $A\times B$
in the category $\Graph$ is a graph with vertex set $V(A\times B)=V(A)\times V(B)$ such that
for $(a_1,b_1),(a_2,b_2)\in V(A\times B)$ we have $(a_1,b_1)\sim_{A\times B}(a_2,b_2)$
if and only if $a_1\sim_A a_2$ and $b_1\sim_B b_2$. We write $p_A$ and $p_B$ for the
projections from $A\times B$ onto $A$ and $B$, respectively: $p_A(a,b)=a$, $p_B(a,b)=b$.

\begin{corollary}
Let $(A,m)$, $(B,m)$ be perfect matchings. Then their product
in $\Perf$ has the $A\times B$ as the underlying graph and the perfect matching on
$A\times B$ is given by the rule $m(a,b)=(m(a),m(b))$.
\end{corollary}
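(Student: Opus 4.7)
The plan is to appeal to Theorem~\ref{thm:MatchingsAreAlgebras} and the standard fact that the forgetful functor $U:\Graph^T\to\Graph$ from an Eilenberg--Moore category creates (and in particular preserves) all limits that exist in the base category. Since the product $A\times B$ exists in $\Graph$, its lift to $\Perf$ must have $A\times B$ as its underlying graph, and the $T$-algebra structure on $A\times B$ is uniquely determined by the requirement that the projections $p_A,p_B$ become morphisms of algebras.

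Next I would identify this lifted structure explicitly. Writing $\alpha_A,\alpha_B$ for the algebras associated with $m_A,m_B$ under the isomorphism of Theorem~\ref{thm:MatchingsAreAlgebras}, the fact that $p_A,p_B$ must be algebra morphisms forces, for every vertex $(a,b)_1\in V(T(A\times B))$, the equalities $p_A(\alpha((a,b)_1))=\alpha_A(a_1)=m_A(a)$ and $p_B(\alpha((a,b)_1))=m_B(b)$. Hence $\alpha((a,b)_1)=(m_A(a),m_B(b))$, and translating back via Theorem~\ref{thm:MatchingsAreAlgebras} yields the claimed perfect matching $m(a,b)=(m_A(a),m_B(b))$ on $A\times B$.

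Finally, I would record the routine verifications: the pair $\{(a,b),(m_A(a),m_B(b))\}$ is an edge of $A\times B$ because $a\sim_A m_A(a)$ and $b\sim_B m_B(b)$, which by the definition of the product graph gives $(a,b)\sim_{A\times B}(m_A(a),m_B(b))$; moreover $m\circ m=\id$ follows coordinatewise from $m_A\circ m_A=\id$ and $m_B\circ m_B=\id$. The universal property is then straightforward: given $\Perf$-morphisms $f:(C,m_C)\to(A,m_A)$ and $g:(C,m_C)\to(B,m_B)$, the unique graph morphism $\langle f,g\rangle:C\to A\times B$ preserves the matching, since
\[
\langle f,g\rangle(m_C(c))=(f(m_C(c)),g(m_C(c)))=(m_A(f(c)),m_B(g(c)))=m(\langle f,g\rangle(c)).
\]
There is no serious obstacle here; the only conceptual input is the creation-of-limits principle for monadic adjunctions, after which the identification of the matching and the verification of the universal property are purely mechanical.
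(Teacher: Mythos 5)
Your proposal is correct and follows essentially the same route as the paper: both invoke the creation of limits by the monadic forgetful functor $U:\Graph^T\to\Graph$ to identify the underlying graph as $A\times B$ and the algebra structure as the unique map making the projections into algebra morphisms, then translate via the isomorphism of Theorem~\ref{thm:MatchingsAreAlgebras}. Your closing verifications of the matching axioms and the universal property are redundant once creation of limits has been invoked, but they are harmless.
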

\begin{proof}
Since the functor $U:\Graph^T\to\Graph$ is a right adjoint in a
monadic adjunction, it creates all limits that exist in $\Graph$,
see \cite[Proposition IV.4.1]{maclane2012sheaves}.  For products
in $\Graph^T$ this means that $(A,\alpha)\times(B,\beta)$ is the
algebra $(A\times B,\gamma)$, where $\gamma$ is the unique arrow that makes
the diagram
\begin{equation}
\label{diag:product}
\xymatrix@C=4em{
T(A)
	\ar[d]_{\alpha}
&
T(A\times B)
	\ar[l]_-{T(p_A)}
	\ar@{.>}[d]^{\gamma}
	\ar[r]^-{T(p_B)}
&
T(B)
	\ar[d]^{\beta}
\\
A
&
A\times B
	\ar[l]^{p_A}
	\ar[r]_{p_B}
&
B
}
\end{equation}
commute. Explicitly, this means that $\gamma:T(A\times B)\to A\times B$ is
given by the rule $\gamma((a,b)_i)=(\alpha(a_i),\beta(b_i))$. If we
translate this fact into the language
of perfect matching via the isomorphism $\Graph^T\simeq\Perf$ that we constructed in the proof
of Theorem~\ref{thm:MatchingsAreAlgebras}, 
the product $(A,m)\times(B,m)$ is a matching on $A\times B$ is given by $m(a,b)=(m(a),m(b))$.
Indeed,
$$
m(a,b)=\gamma((a,b)_1)=(\alpha(a_1),\beta(b_1))=(m(a),m(b)).
$$
\end{proof}

Similarly, one can prove that an equalizer of two parallel morphisms
$f,g:(A,m)\to (B,m)$ in the category $\Perf$ is given by the restriction of $m$ to the induced
subgraph $E\xhookrightarrow{e} A$, where $V(E)=\{v\in V(A):f(v)=g(v)\}$. 

\section{The Steiner triple monad and its algebras}

\begin{figure}
\begin{center}
\includegraphics{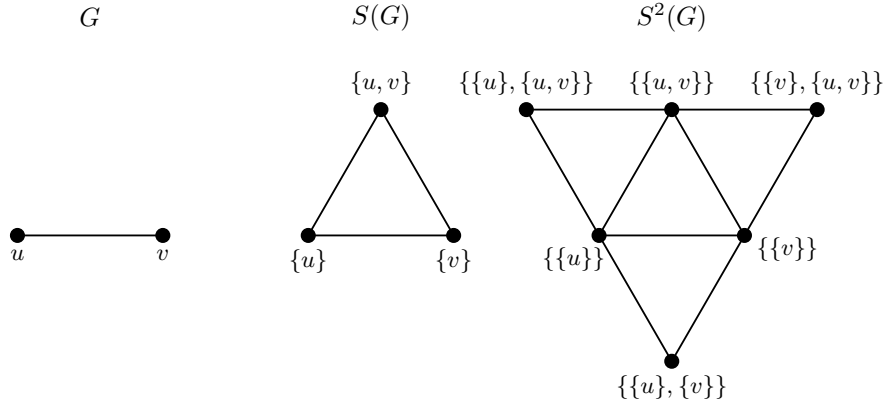}
\end{center}
\caption{The endofunctor $S$ adds a new triangle over every edge.}
\label{fig:steiner}
\end{figure}

In this section, we will introduce a monad $S$ on $\Graph$ and prove that
the category of algebras for $S$ is isomorphic to the category of
partial Steiner triple systems. We call this monad a 
{\em Steiner triple monad}.

Recall, that a {\em partial Steiner triple system} is a finite set $A$ equipped with a system of
3-element subsets $\Omega_A$ such that every 2-element subset $\{u,v\}$ of $A$ is in at most
one set in $T_A$. 
A partial Steiner triple system is {\em complete} if every 2-element
subset of $A$ occurs in exactly one set in $T_A$. A complete partial Steiner triple system is called
simply a {\em Steiner triple system}.

The category of partial Steiner triple systems has pairs $(A,\Omega_A)$ as objects. For 
a pair of objects $(A,\Omega_A)$ and $(B,\Omega_B)$ a morphism is a mapping $f:A\to B$ such that for 
every triple $\{u,v,w\}\in \Omega_A$, $\{f(u),f(v),f(w)\}\in\Omega_B$. We denote this
category by $\PSTS$.

Let us describe the monad $S$.  For a graph $G$, the graph $S(G)$ is the graph
that can be described as ``a copy of $G$ with a new triangle over every edge''.

Formally, it will be of advantage to define $S(G)$ as a graph
with the set of vertices $V(S(G))=\{\{u\}:u\in V(G)\}\cup E(G)$ and, for all $X,Y\in V(S(G))$, 
$X\sim_{S(G)}Y$ if and only if one of the following is true:
\begin{itemize}
\item $X=\{u\}$, $Y=\{v\}$ and $u\sim v$.
\item $X=\{u,v\}$, $Y=\{v\}$ and $u\sim v$.
\item $X=\{u\}$, $Y=\{u,v\}$ and $u\sim v$.
\end{itemize}
Clearly, $S$ is the object part of an endofunctor on the category $\Graph$; for
a morphism of graphs $f:G\to H$, $S(f):S(G)\to S(H)$ is given by
$S(\{u\})=f(\{u\})$ for $u\in V(G)$ and $S(\{u,v\})=S(\{f(u),f(v)\})$ for
$\{u,v\}\in E(G)$.

For every graph $G$, there is a morphism $\eta_G:G\to S(G)$ given by the rule
$\eta_G(v)=\{v\}$. It is obvious that this family of morphisms gives us a natural
transformation $\eta:\id_\Graph\to S$.

The last piece of data we need is a natural transformation $\mu:S^2\to S$. For every
edge $\{u,v\}$ of a graph $G$, $S(G)$ contains (essentially) the original edge and two new
edges, forming a triangle with vertices $\{u\},\{v\},\{u,v\}$. Repeating this construction
once again, we see that $S^2(G)$ consists of ``triangles with an inscribed triangle'', one
for every edge of the original graph $G$, see Figure~\ref{fig:steiner}. There is a clear candidate for the desired
mapping $\mu_G:S^2(G)\to S(G)$; $\mu_G$ just folds the three outer triangles
onto the inner one. Note that a formal description of $\mu_G$ is very simple:
the vertex set $V(S^2(G))$ consists of certain systems of sets of vertices of $G$. Then
$\mu_G(\mathbb X)$ is simply the symmetric difference of all sets in the system 
$\mathbb X$; in symbols $\mu_G(\mathbb X)=\Delta\mathbb X$. For example
$\mu_G(\{\{u\},\{u,v\}\})=\{u\}\Delta\{u,v\}=\{v\}$ and $\mu_G(\{\{u,v\}\})=\{u,v\}$. 

It is obvious
that $\mu_G$ is a morphism in $\Graph$ and that the family of morphisms
$(\mu_G)_{G\in\Graph}$ is a natural transformation from $S^2$ to $S$.

\begin{theorem}
$(S,\mu,\eta)$ is a monad on the category of graphs.
\end{theorem}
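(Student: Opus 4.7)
The plan is to verify the two unit axioms and the associativity axiom pointwise on vertices, leveraging the fact that on any power set the symmetric difference $\Delta$ is an associative, commutative, idempotent binary operation with $\emptyset$ as unit. This puts us in a setting closely parallel to the proof that $(T,\eta,\mu)$ is a monad, with the role played there by the monoid $(\{0,1\},\oplus,0)$ now taken over by the $\mathbb{F}_2$-vector space structure on $\mathcal{P}(V(G))$ induced by $\Delta$.

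For the unit axioms, I evaluate the two compositions directly on a vertex $X\in V(S(G))$. On one side, $\eta_{S(G)}(X)=\{X\}$, and hence $\mu_G(\eta_{S(G)}(X))=\Delta\{X\}=X$. On the other, I split into cases by the type of $X$: if $X=\{u\}$, then $S(\eta_G)(X)=\{\{u\}\}$, which $\mu_G$ sends back to $\{u\}$; if $X=\{u,v\}$ is an edge of $G$, then $S(\eta_G)(X)=\{\{u\},\{v\}\}$, which is an edge of $S(G)$ of the first type, and $\mu_G(\{\{u\},\{v\}\})=\{u\}\Delta\{v\}=\{u,v\}$. Here one uses crucially that $u\neq v$, which is ensured by the absence of loops.

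For the associativity axiom, I fix a vertex $\mathcal{X}\in V(S^3(G))$, in which case $\mu_G(\mu_{S(G)}(\mathcal{X}))=\Delta(\Delta\mathcal{X})$ while $\mu_G(S(\mu_G)(\mathcal{X}))=\Delta(\{\Delta\mathbb{X}:\mathbb{X}\in\mathcal{X}\})$. These coincide by the standard indicator-function computation modulo $2$: for any $z\in V(G)$, swapping the order of summation yields
\[
\chi_{\Delta(\Delta\mathcal{X})}(z)\equiv\sum_Y\chi_{\Delta\mathcal{X}}(Y)\,\chi_Y(z)\equiv\sum_{\mathbb{X}\in\mathcal{X}}\sum_Y\chi_{\mathbb{X}}(Y)\,\chi_Y(z)\equiv\sum_{\mathbb{X}\in\mathcal{X}}\chi_{\Delta\mathbb{X}}(z)\pmod{2},
\]
which is exactly the indicator of the right-hand side. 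Since $|\mathcal{X}|\leq 2$ and every $\mathbb{X}\in\mathcal{X}$ has at most two elements, this identity can alternatively be verified by a short case analysis.

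The step I expect to require the most care is not the algebraic identity itself but the bookkeeping: at each level one must check that the intermediate values really lie in the specific subset of iterated power sets that forms $V(S^k(G))$. This is forced, however, by the combinatorics of $S$: inspecting the three admissible edge types in $S(G)$, the symmetric difference of any adjacent pair turns out to be either a singleton $\{w\}$ or an edge $\{u,v\}$ of $G$, so $\Delta$ restricts to a well-defined map $V(S^{k+1}(G))\to V(S^k(G))$ at every level.
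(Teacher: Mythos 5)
Your proof is correct and follows essentially the same route as the paper: the unit laws by the same case split on singleton versus edge vertices (with looplessness guaranteeing $u\neq v$), and associativity by reducing both composites to an iterated symmetric difference, your mod-$2$ indicator computation being just a systematic rephrasing of the paper's observation that sets occurring an even number of times cancel since $Y\Delta Y=\emptyset$. One terminological slip: $\Delta$ is not \emph{idempotent} ($Y\Delta Y=\emptyset$, not $Y$); you mean that every element is $2$-torsion, which is what your computation actually uses.
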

\begin{proof}
We need to prove that, putting $T=S$ in~\eqref{diag:monad},  both triangles and the square 
commute.

Let $\{u,v\}$ be a vertex of $S(G)$, that means $u\sim v$ in $G$.
Then, in particular, $u\neq v$ and we may chase $\{u,v\}$ around the triangles:
\begin{align*}
&\mu_G(S(\eta_G)(\{u,v\}))=\mu_G(\{\eta_G(u),\eta_G(v)\})=\mu_G(\{\{u\},\{v\}\})=
\{u\}\Delta\{v\}=\{u,v\}\\
&\mu_G(\eta_{S(G)}(\{u,v\}))=\mu_G(\{\{u,v\}\})=\{u,v\}
\end{align*}
The case of a vertex of the form $\{u\}$ is trivial.

The fact that the square in~\eqref{diag:monad} commutes follows from the fact that
the symmetric difference $\Delta$ is a commutative and associative operation 
with a neutral element $\emptyset$ and that $Y\Delta Y=\emptyset$.

In detail, every element of $V(S^3(G))$ is some set of sets of of sets of the form
\begin{equation}
\label{eq:delta}
\{\{X^1_1,X^1_2,\dots,X^1_{k_1}\},\{X^2_1,X^2_2,\dots,X^2_{k_2}\},\dots,\{X^n_1,X^n_2,\dots,X^n_{k_n}\}\}
\end{equation}
where each $X^i_j$ is a set of vertices of $G$. (In fact, $n\in\{1,2\}$,
each $k_i\in\{1,2\}$ and every $X_i^j$ is either a singleton or a pair, but we shall not need any of these facts.)
The morphism $S(\mu_G)$ maps this
element to the system of sets
$$
\{X^1_1\Delta X^1_2\Delta \dots\Delta X^1_{k_1},X^2_1\Delta X^2_2\Delta \dots\Delta X^2_{k_2},\dots,X^n_1\Delta X^n_2\Delta \dots\Delta X^n_{k_n}\}
$$
in $V(S^2(G))$ and this is mapped by $\mu_G$ to the set
\begin{equation}\label{eq:result}
(X^1_1\Delta X^1_2\Delta \dots\Delta X^1_{k_1})\Delta(X^2_1\Delta X^2_2\Delta \dots\Delta X^2_{k_2})\Delta\dots\Delta(X^n_1\Delta X^n_2\Delta \dots\Delta X^n_{k_n})
\end{equation}
Chasing the element~\eqref{eq:delta} the other way around the square~\eqref{diag:monad}, 
$\mu_{S(G)}$ maps it to
$$
\{X^1_1,X^1_2,\dots,X^1_{k_1}\}\Delta \{X^2_1,X^2_2,\dots,X^2_{k_2}\}\Delta\dots\Delta\{X^n_1,X^n_2,\dots,X^n_{k_n}\}
$$
this amounts to keeping just those sets $X^i_j$ that occur odd number of times. We then apply $\mu_G$ to the
resulting system of sets, so we get an expression exactly like~\eqref{eq:result}, but with
those $X^i_j$ that occur even number of times removed. However, removing those
sets does not change the value of the expression and we have proved that the square commutes.
\end{proof}

\begin{theorem}
The category of algebras $\Graph^S$ for the Steiner triple monad is isomorphic to the category
$\PSTS$.
\end{theorem}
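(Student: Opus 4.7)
The strategy is to parallel the proof of Theorem~\ref{thm:MatchingsAreAlgebras}. Given an algebra $\alpha:S(A)\to A$, the triangle~\eqref{diag:algtriangle} forces $\alpha(\{u\})=u$ for every $u\in V(A)$, so $\alpha$ is completely determined by its values on the edge-vertices $\{u,v\}\in E(A)\subseteq V(S(A))$. I will then set
$$
\Omega_A=\bigl\{\{u,v,\alpha(\{u,v\})\}:\{u,v\}\in E(A)\bigr\}
$$
and argue that $(V(A),\Omega_A)\in\PSTS$. Because $\alpha$ is a graph homomorphism sending the edge $\{\{u\},\{u,v\}\}$ of $S(A)$ to an edge $\{u,\alpha(\{u,v\})\}$ of the loopless graph $A$, we must have $\alpha(\{u,v\})\notin\{u,v\}$, so each element of $\Omega_A$ is a genuine $3$-element set. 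Chasing $\{\{u\},\{u,v\}\}\in V(S^2(A))$ around the square~\eqref{diag:algsquare} produces the identity
$$
\alpha\bigl(\{u,\alpha(\{u,v\})\}\bigr)=v,
$$
which shows that if $w=\alpha(\{u,v\})$ then $\alpha(\{u,w\})=v$ (and, symmetrically from $\{\{v\},\{u,v\}\}$, also $\alpha(\{v,w\})=u$). Hence the same triple $\{u,v,w\}$ is produced from each of its three contained pairs, so no pair can belong to two distinct triples.

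Conversely, from $(A,\Omega_A)\in\PSTS$ I will put a graph structure on $A$ by declaring $u\sim v$ iff $\{u,v\}\subseteq T$ for some $T\in\Omega_A$, and define $\alpha:S(A)\to A$ by $\alpha(\{u\})=u$ and $\alpha(\{u,v\})$ equal to the unique third element of the triple containing $\{u,v\}$. That $\alpha$ is a graph homomorphism uses exactly that any pair contained in a triple is an edge of the derived graph. The triangle axiom is immediate; the square axiom I would check by case analysis on the types of vertex of $S^2(A)$ --- the singletons $\{\{u\}\}$ and $\{\{u,v\}\}$ and the edge-vertices $\{\{u\},\{v\}\}$ and $\{\{u\},\{u,v\}\}$ --- the only substantive case being the last, where the required equality is just the PSTS-level fact that in the unique triple through $\{u,w\}$ the third vertex is $v$.

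For morphisms, a graph homomorphism $h:A\to B$ commutes with the algebra structures iff it sends every triple of $\Omega_A$ to a triple of $\Omega_B$; loopless-ness of $B$ is needed to guarantee $h(u)\neq h(v)$ whenever $u\sim v$, so that the image of a triple is a bona fide $3$-element set. Functoriality and mutual inverseness of the two constructions are then routine, and the round trip algebra~$\to$~PSTS~$\to$~algebra recovers the original graph because the edges of $A$ are precisely the pairs contained in some triple of $\Omega_A$. The principal obstacle I anticipate is disciplined bookkeeping when chasing the square through $V(S^2(A))$: its elements are sets of sets of subsets of $V(A)$, and one must carefully distinguish, for instance, the singleton $\{\{u,v\}\}$ from the edge-vertex $\{\{u\},\{u,v\}\}$ before applying $\mu_A=\Delta$.
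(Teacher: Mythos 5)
Your proposal is correct and follows essentially the same route as the paper: extract $\Omega_\alpha=\{\{u,v,\alpha(\{u,v\})\}\}$ from the triangle axiom and graph-homomorphism property, derive $\alpha(\{u,\alpha(\{u,v\})\})=v$ from the square axiom to get uniqueness of triples, and invert the construction by reading $\alpha(\{u,v\})$ off as the third point of the unique triple through an edge. Your explicit remarks on looplessness (guaranteeing genuine $3$-element triples) and on distinguishing $\{\{u,v\}\}$ from $\{\{u\},\{u,v\}\}$ in $V(S^2(A))$ are points the paper treats more implicitly, but the argument is the same.
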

\begin{proof}
Let $(G,\alpha)$ be an algebra for the Steiner triple monad. 
There are two types of vertices in $S(G)$: singletons and pairs. The triangle
axiom~\eqref{diag:algtriangle} tells us that $\alpha(\{u\})=u$ for all $u\in V(G)$.
Thus, every algebra $\alpha$ is completely determined by its value on pairs, that means, edges of $G$.
Since $\alpha$ is a morphism of graphs, the image of every triangle in $S(G)$ under $\alpha$ is a triangle
in $G$. Hence for every $\{u,v\}\in E(G)$, $\{\{u\},\{v\},\{u,v\}\}$ is a triangle in
$S(G)$ and its image is the triangle
$$
\{\alpha(\{u\}),\alpha(\{v\}),\alpha(\{u,v\})\}=\{u,v,\alpha(\{u,v\})\}.
$$
Thus, for every edge $\{u,v\}$ of $G$, $\alpha$ selects a triangle in $G$ with vertices
$u,v$ and $\alpha(\{u,v\})$. 

We claim that the set $V(G)$ equipped with the system of triples
$$
\Omega_\alpha=\bigl\{\{u,v,\alpha(\{u,v\})\}:u\sim_G v\bigr\}
$$
is a partial Steiner triple system. 

It is clear that a pair of vertices $\{u,v\}$ of $G$ occurs as a subset in at least one
of the triples in $\Omega_\alpha$ if and only if $\{u,v\}$ is an edge of
$G$. It remains to prove that every edge occurs in exactly one triple in
$\Omega_\alpha$ or, in other words, that the triple that arises from an edge
$\{u,v\}$ is the same as the triple that arises from the edge
$\{u,\alpha(\{u,v\})\}$.

In terms of properties of the $\alpha$ mapping, this amounts to
$$
\alpha(\{u,\alpha(\{u,v\}))=v.
$$
However, this follows by the commutativity of the square~\eqref{diag:algsquare}, because
$$
(\alpha\circ S(\alpha))(\{\{u\},\{u,v\}\})=\alpha(\{\alpha(\{u\}),\alpha(\{u,v\})\})=\alpha(\{u,\alpha(\{u,v\}))
$$
and
$$
(\alpha\circ\mu_G)(\{\{u\},\{u,v\}\})=\alpha(\{u\}\Delta\{u,v\})=\alpha(\{v\})=v.
$$
To prove that this construction is functorial,
let us write $F(G,\alpha)$ for the partial Steiner triple system $(V(G),\Omega_\alpha)$
constructed above.
Every $f:(G,\alpha)\to (G',\alpha')$ induces a $\PSTS$-morphism $F(f):F(G,\alpha)\to F(G',\alpha')$.
Indeed, for all $\{u,v,\alpha(\{u,v\})\}\in\Omega_\alpha$,
\begin{align*}
\{f(u),f(v),f(\alpha(\{u,v\}))\}&=\\
\{f(u),f(v),\alpha(S(f)(\{u,v\}))\}&=
\{f(u),f(v),\alpha(\{f(u),f(v)\})\},
\end{align*}
because $f$ is a morphism of algebras. $F$ is then a functor from $\Graph^T$ to $\PSTS$. 

On the other hand, let $(A,\Omega_A)$ be a partial Steiner triple system.
Let $G$ be a graph with $V(G)=A$ and $u\sim_G v$ if and only if $u\neq v$ and 
$u,v\in X$ for some $X\in\Omega_A$. Let us define a morphism of graphs 
$\alpha:S(G)\to G$ by the rules $\alpha(\{u\})=u$ and $\alpha(\{u,v\})=w$,
where $w$ is the unique element of $A$ such that $\{u,v,w\}\in\Omega_A$.

Clearly, $\alpha$ is a morphism of graphs. We need to prove that $(G,\alpha)$
is an algebra for the Steiner triple monad. The triangle axiom~\eqref{diag:algtriangle}
is clearly satisfied by $\alpha$, so
let us check the square axiom~\eqref{diag:algsquare}.
Let $K$ be a vertex of $S^2(G)$. We need to prove that $\alpha(\mu_G(K))=\alpha(S(\alpha)(K))$.
Let $u,v\in X$ for some $X\in\Omega_A$, so that $\{u,v\}$ is an edge of $G$. 

If $K=\{\{u,v\},\{u\}\}$, 
then
$\alpha(\mu_G(K))=\alpha(\{u,v\}\Delta\{v\})=\alpha(\{v\})=v$ and $\alpha(S(\alpha)(\{\{u,v\},\{u\}))=
\alpha(\{\alpha(\{u,v\}),u\})=v$, because $X=\{u,v,\alpha(\{u,v\})$.

The cases of $K=\{\{u,v\}\}$, $K=\{\{u\}\}$ and $K=\{\{u\},\{v\}\}$ are trivial are thus omitted.

To prove the functoriality of this construction,
let us write $U(A,\Omega_A)=(G,\alpha)$. We claim that every morphism
in $\PSTS$ $f:(A_1,\Omega_{A_1})\to(A_2,\Omega_{A_2})$ induces
a morphism $U(f):U(A_1,\Omega_{A_1})\to U(A_2,\Omega_{A_2})$. This amounts
to proving that~\eqref{diag:algmorphism} commutes.

If $\{u,v\}$ is an edge of $U(A_1,\Omega_{A_1})$, then
$\{u,v,\alpha_1(\{u,v\})\}\in \Omega_{A_1}$ is the unique triple
that contains $u$ and $v$. Since $f$ is a morphism of partial Steiner triple
systems, $\{f(u),f(v),f(\alpha_1(\{u,v\}))\}\in \Omega_{A_2}$. Further,
$\alpha_2(S(f)(\{u,v\}))=\alpha_2(\{f(u),f(v)\})$ and
$\{f(u),f(v),\alpha_2(\{f(u),f(v)\})\}$ is the unique triple that contains
$f(u)$ and $f(v)$. Therefore $f(\alpha_1(\{u,v\}))=\alpha_2(S(f)(\{u,v\}))$,
meaning that~\eqref{diag:algmorphism} commutes.

For a singleton vertex $\{u\}$ of $U(A_1,\Omega_{A_1})$, 
$$
f(\alpha_1(\{u\}))=f(u)=\alpha_2(\{f(u)\})=\alpha_2(S(f)(\{u\})).
$$

Therefore $U$ is a functor and it is easy to check that
$UF=\id_{\Graph^S}$ and $FU=\id_\PSTS$.
\end{proof}

\begin{corollary}
Let $(A,\Omega_A)$, $(B,\Omega_B)$ be partial Steiner triple systems. Then their product
in $\PSTS$ is $(A\times B,\Omega_{A\times B})$, where
$X\in\Omega_{A\times B}$ if and only if $X$ is a 3-element subset of
$A\times B$ such that $p_A(X)\in\Omega_A$ and $p_B(X)\in\Omega_B$.
\end{corollary}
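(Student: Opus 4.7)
The plan is to mimic the proof of the product formula in $\Perf$. Let $(G_A,\alpha)$ and $(G_B,\beta)$ be the algebras for $S$ corresponding under the previous theorem to the partial Steiner triple systems $(A,\Omega_A)$ and $(B,\Omega_B)$; here $G_A$ and $G_B$ are the underlying graphs whose vertex sets are $A$ and $B$, respectively. Because $U\colon\Graph^S\to\Graph$ is a right adjoint in a monadic adjunction, it creates all limits that exist in $\Graph$. Hence the product in $\Graph^S$ is $(G_A\times G_B,\gamma)$, where $\gamma\colon S(G_A\times G_B)\to G_A\times G_B$ is the unique morphism making the product cone commute: $p_A\circ\gamma=\alpha\circ S(p_A)$ and $p_B\circ\gamma=\beta\circ S(p_B)$.

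First I would compute $\gamma$ explicitly. The triangle axiom forces $\gamma(\{(a,b)\})=(a,b)$. For an edge vertex $\{(a_1,b_1),(a_2,b_2)\}$ of $S(G_A\times G_B)$, the maps $S(p_A)$ and $S(p_B)$ send it to $\{a_1,a_2\}$ and $\{b_1,b_2\}$ respectively, so the product cone equations force
\[
\gamma(\{(a_1,b_1),(a_2,b_2)\})=\bigl(\alpha(\{a_1,a_2\}),\,\beta(\{b_1,b_2\})\bigr).
\]
Translating through the isomorphism $\Graph^S\simeq\PSTS$, the triples in $\Omega_\gamma$ are exactly the sets $\{(a_1,b_1),(a_2,b_2),(a_3,b_3)\}$ with $a_3=\alpha(\{a_1,a_2\})$ and $b_3=\beta(\{b_1,b_2\})$; equivalently, those with $\{a_1,a_2,a_3\}\in\Omega_A$ and $\{b_1,b_2,b_3\}\in\Omega_B$.

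It remains to verify that this description coincides with the one in the corollary. One inclusion is immediate: such a triple has three distinct elements (the $a_i$ are pairwise distinct) with $p_A(X)\in\Omega_A$ and $p_B(X)\in\Omega_B$. For the reverse inclusion, let $X\subseteq A\times B$ be a 3-element subset with $p_A(X)\in\Omega_A$ and $p_B(X)\in\Omega_B$. Then $|p_A(X)|=|p_B(X)|=3$, so both projections restrict to bijections on $X$; writing $X=\{(a_1,b_1),(a_2,b_2),(a_3,b_3)\}$ with $p_A(X)=\{a_1,a_2,a_3\}$ and $p_B(X)=\{b_1,b_2,b_3\}$, uniqueness of the third point in a partial Steiner triple gives $a_3=\alpha(\{a_1,a_2\})$ and $b_3=\beta(\{b_1,b_2\})$, so $X\in\Omega_\gamma$. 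I do not anticipate a serious obstacle; the one thing to keep in mind is the coherent labeling of $X$ via both projections, which works precisely because each projection is a bijection on $X$.
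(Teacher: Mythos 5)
Your proposal is correct and follows essentially the same route as the paper: invoke the fact that the monadic forgetful functor creates limits, compute the structure map $\gamma$ on $S(G_A\times G_B)$ from the product cone, and translate back through the isomorphism with $\PSTS$. The only difference is that you spell out the final identification of $\Omega_\gamma$ with the stated $\Omega_{A\times B}$ (in particular the point that both projections restrict to bijections on a qualifying $3$-element set), a step the paper compresses into ``the statement then easily follows.''
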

\begin{proof}
We can describe the product in the
category $\Graph^S$ using the diagram~\eqref{diag:product}: the product
of two algebras $(A,\alpha)$ and $(B,\gamma)$ for the Steiner triple monad
is the algebra $(A\times B,\gamma)$, where $\gamma$ is given by the rules
\begin{align*}
\gamma(\{(a,b)\})&=(\alpha(\{a\}),\beta(\{b\}))=(a,b)\\
\gamma(\{(a_1,b_1),(a_2,b_2)\})&=(\alpha(\{a_1,a_2\}),\beta(\{b_1,b_2\})),
\end{align*}
for every vertex $(a,b)$ and edge $\{(a_1,b_1),(a_2,b_2)\}$ of the graph $A\times B$. 
That means
that the product of partial Steiner triple systems $(A,\Omega_A)$ and
$(B,\Omega_B)$ in $\PSTS$ is $(A\times B,\Omega_{A\times B})$ with
$$
\Omega_{A\times B}=\{\{(a_1,b_1),(a_2,b_2),\bigl(\alpha(\{a_1,a_2\}),\beta(\{b_1,b_2\})\bigr)\}:a_1\sim_A a_2\text{ and }b_1\sim_B b_2\},
$$
where $(A,\alpha)$ and $(B,\beta)$ are the algebras associated with $(A,\Omega_A)$ and $(B,\Omega_B)$, respectively.
The statement then easily follows.
\end{proof}

\section{Conclusion and further work}

We have shown that both the category of perfect matchings and the category of
partial Steiner triple systems can by represented as algebras for a monad.
There is a similarity between the monads: one can say that the monad $S$ does something
similar to the monad $T$, but one dimension higher. This suggests an obvious question,
whether these monads belong to some more general family of monads on $\Graph$. We plan to investigate
this question in a future paper.

\end{document}